\documentclass[11pt,reqno]{amsart}
\usepackage[margin=1in]{geometry}
\usepackage[T1]{fontenc}
\usepackage[utf8]{inputenc}
\usepackage{amsmath,amssymb,amsthm,mathtools}
\usepackage{enumitem,booktabs}
\usepackage[colorlinks=true,linkcolor=blue,citecolor=red,urlcolor=blue]{hyperref}

\usepackage[T1]{fontenc}
\usepackage[utf8]{inputenc}
\usepackage{times}
\usepackage{amsmath,amssymb,amsthm}
\usepackage{mathtools}
\usepackage{upgreek}

\usepackage{enumitem}
\usepackage{booktabs}
\usepackage{tabularx}
\usepackage{environ}
\usepackage{fancyhdr}

\numberwithin{equation}{section}

\theoremstyle{plain}
\newtheorem{theorem}{Theorem}[section]
\newtheorem{proposition}[theorem]{Proposition}
\newtheorem{lemma}[theorem]{Lemma}

\theoremstyle{definition}

\theoremstyle{remark}

\DeclareMathOperator{\Res}{Res}
\DeclareMathOperator{\Gal}{Gal}
\DeclareMathOperator{\Frob}{Frob}

\DeclareMathOperator{\Real}{Re}

\newcommand{\disc}{\mathfrak D}
\newcommand{\Q}{\mathbb{Q}}
\newcommand{\N}{\mathbb{N}}

\newcommand{\F}{\mathcal{F}}
\renewcommand{\le}{\leqslant}
\renewcommand{\leq}{\leqslant}
\renewcommand{\ge}{\geqslant}
\renewcommand{\geq}{\geqslant}
\title[Omega bound for the Piltz divisor problem]{Omega bound for the Piltz divisor problem}
\author{Nilmoni Karak}
	\address{ \scriptsize Nilmoni Karak, Department of Mathematics,
		Indian Institute of Technology	Kharagpur,
		Kharagpur-721302, India}
	\email{nilmonikarak@gmail.com, nilmonimath@kgpian.iitkgp.ac.in}

	\author{Kamalakshya Mahatab}
	\address{ \scriptsize  Kamalakshya Mahatab, Department of Mathematics,
		Indian Institute of Technology	Kharagpur,
		Kharagpur-721302, India}
	\email{accessing.infinity@gmail.com, kamalakshya@maths.iitkgp.ac.in}
	
\subjclass[2020]{Primary 11N56, 11R42; Secondary 11P21}
\keywords{Divisor problem, number field, Chebotarev density, Selberg-Delange method}

\begin{document}
\begin{abstract}
We obtain improved omega bounds for the error term
in the Piltz divisor problem over number fields.
Our proof combines Lamzouri's resonance argument with a counting theorem
for nonnegative multiplicative functions. The counting theorem gives the
estimates needed for the divisor coefficients over a number field and
allows us to remove the factor involving the third iterated logarithm
from the earlier bounds.
\end{abstract}
\maketitle

\section{Introduction}
\label{sec:introduction}

The Piltz divisor problem concerns the error term in the average order
of the generalized divisor function. Over number fields, it includes
the Dirichlet divisor problem and the Gauss circle problem as special
cases. In this paper, we study how large the absolute value of this
error term can be and when the sign of these large values can be
determined.

Let $K$ be a number field of degree $m=r_1+2r_2$, where $r_1$ is the
number of real embeddings and $r_2$ is the number of pairs of complex
embeddings. For a positive integer $k$, define $d_K^{(k)}(n)$ by
\begin{equation}
  \zeta_K(s)^k
  =\sum_{n=1}^{\infty}\frac{d_K^{(k)}(n)}{n^s},
  \qquad \Real s>1,
  \label{eq:divisor-coefficients}
\end{equation}
where $\zeta_K(s)$ is the Dedekind zeta function of $K$. Thus
$d_K^{(k)}(n)$ counts the ordered $k$-tuples
$(\mathfrak n_1,\ldots,\mathfrak n_k)$ of nonzero integral ideals of
$\mathcal O_K$ whose product has absolute norm $n$. We consider the
error term
\begin{equation}
  \Delta_K^{(k)}(x)
  =\sum_{n\le x}d_K^{(k)}(n)
   -\Res_{s=1}\left(\zeta_K(s)^k\frac{x^s}{s}\right).
  \label{eq:error-term}
\end{equation}
For $K=\mathbb Q$ and $k=2$, this is the classical divisor error term.
For $K=\mathbb Q(i)$ and $k=1$, four times this quantity is the circle
error term, with the sum taken over positive integers.
Throughout the paper, we assume that $mk\ge2$ and  denote $\log_j x$
for the $j$-fold iterated logarithm.

The exponents in our bounds depend on how rational primes split in
$K$. Let $L$ be the Galois closure of $K$, and let
$G=\operatorname{Gal}(L/\mathbb Q)$ act on the $m$ embeddings of $K$
into $L$. If $v(g)$ denotes the number of fixed points of $g$ in this
action, for $0\le\nu\le m$, we define 
\begin{equation*}
    \delta_\nu
  =\frac{\left|\{g\in G:v(g)=\nu\}\right|}{|G|}.
\end{equation*}
By the Chebotarev density theorem, $\delta_\nu$ is the density of
rational primes unramified in $L$ that have exactly $\nu$ prime ideals
of degree one above them in $K$. These constants satisfy
$\sum_{\nu=1}^m\nu\delta_\nu=1$. Set
\begin{equation}
  \alpha=\frac{mk+1}{2mk},\qquad
  \eta=\frac{mk-1}{2mk},\qquad
  q=\frac1\alpha
  \label{eq:R-eta}
\end{equation}
and
\begin{equation}
  \Theta=\sum_{\nu=1}^m\delta_\nu(k\nu)^q,\qquad
  \kappa=\alpha(\Theta-1),\qquad
  R=\#\{1\le\nu\le m:\delta_\nu>0\}.
  \label{eq:splitting-exponent}
\end{equation}

For a real-valued function $f$ and a positive function $g$, we write
$f=\Omega(g)$ if
$\limsup_{x\to\infty}|f(x)|/g(x)>0$.
We write $f=\Omega_+(g)$ if
$\limsup_{x\to\infty}f(x)/g(x)>0$, and $f=\Omega_-(g)$ if
$\liminf_{x\to\infty}f(x)/g(x)<0$.
The notation $f=\Omega_\pm(g)$ means that both signed bounds hold.

Girstmair, K\"uhleitner, M\"uller and Nowak~\cite{gkmn} proved that
\begin{equation}
  \Delta_K^{(k)}(x)
  =\Omega\!\left(
    (x\log x)^\eta(\log_2x)^\kappa
    (\log_3x)^{-\frac{mk+1}{4mk}R-\eta}
  \right).
  \label{eq:gkmn-bound}
\end{equation}
In our earlier work~\cite{km}, we used the resonance method to improve
this bound to
\begin{equation}
  \Delta_K^{(k)}(x)
  =\Omega\!\left(
    (x\log x)^\eta(\log_2x)^\kappa
    (\log_3x)^{-\frac{mk+1}{4mk}R}
  \right).
  \label{eq:km-bound}
\end{equation}
Both results give $\Omega_+$ when $kr_1\equiv3\pmod8$ and
$\Omega_-$ when $kr_1\equiv7\pmod8$.

Lamzouri~\cite{lamzouri} removed the remaining factor involving
$\log_3x$ in the divisor and circle problems. He also obtained
large positive values for the divisor error term and large negative
values for the circle error term. We extend his approach to the
Piltz divisor problem over number fields. In six residue classes,
we remove the factor involving $\log_3x$ in~\eqref{eq:km-bound}
and determine the sign of the large values. Our main result is the following.
\begin{theorem}
\label{thm:main}
Let $K$ be an algebraic number field of degree $m$ with $r_1$ real embeddings,
and let $k$ be a positive integer such that $mk\ge2$. If $kr_1\not\equiv 1,5\pmod8$, as $x\to\infty$, we have
\begin{equation}
  \Delta_K^{(k)}(x)
  = \Omega\left( x\log x)^\eta(\log_2 x)^\kappa\right),
  \label{eq:main-result}
\end{equation}
where $\eta$ and $\kappa$ are defined in \eqref{eq:R-eta} and \eqref{eq:splitting-exponent} respectively. Moreover, \eqref{eq:main-result} holds with $\Omega_+$ when $kr_1\equiv 2,3,4\pmod 8$, whereas it holds with $\Omega_-$ when $kr_1\equiv 0,6,7\pmod 8$.
\end{theorem}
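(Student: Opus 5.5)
The plan is to follow Lamzouri's resonance argument, with the number-field Voronoi-type expansion of $\Delta_K^{(k)}$ as input. First I would normalize: with $\mathcal D = mk$, a truncated Voronoi formula (as in \cite{gkmn}, \cite{km}) gives, for $T\le x^{\theta}$,
\begin{equation*}
  \Delta_K^{(k)}(x)
  = c\, x^{\eta}\sum_{n\le N} \frac{d_K^{(k)}(n)}{n^{\alpha}}
  \cos\!\bigl(2\pi \mathcal D (c_K n x)^{1/\mathcal D} + \phi\bigr) + \text{small},
\end{equation*}
with $c>0$, $c_K$ depending on the discriminant, and a phase $\phi = \frac{(\mathcal D-1)\pi}{4}$ corrected by the $r_1$-dependent gamma factors. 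The phase depends on $kr_1 \bmod 8$, and this is where the sign cases come from. I would work with the smoothed variable $t=(c_Kx)^{1/\mathcal D}$ and consider $F(t)=t^{-\mathcal D\eta}\Delta_K^{(k)}(t^{\mathcal D}/c_K)$, which is essentially $\Real\bigl(e^{i\phi}\sum d_K^{(k)}(n) n^{-\alpha} e(\mathcal D n^{1/\mathcal D}t)\bigr)$.

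Second, I would apply Lamzouri's resonator. Choose a set $\mathcal S$ of squarefree integers composed of primes $p\in[\log T,(\log T)^{A}]$. For such $p$ the coefficient $d_K^{(k)}(p)=k\nu_p$, where $\nu_p$ is the number of degree-one primes over $p$. Define a resonating weight $r(n)=\prod_{p\mid n} f(p)$ with $f(p)\propto (k\nu_p)^{q-1}$, chosen via the Hölder exponent $q=1/\alpha$. Then integrate $F(t)$ against $|R(t)|^2$ times a Fejér-type kernel $K(t-\cdot)$ over $[T,2T]$, where $R(t)=\sum_{n\in\mathcal S} r(n)e(\mathcal D n^{1/\mathcal D} t)$. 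The frequencies $n^{1/\mathcal D}$ for $n$ of the form $\ell\cdot j^{\mathcal D}$ interact. As in Lamzouri, one uses the rational-multiple structure, $m^{1/\mathcal D}=\ell^{1/\mathcal D}j$ combined with shifts $t\mapsto t+ u$, to extract the diagonal. For sign control, following Lamzouri I would instead evaluate $\int F(t)\,(1\pm\cos(\cdot))$ against the resonator so that a one-sided bound appears. The sign of $\cos\phi$ (or the relevant shifted phase) being positive for $kr_1\equiv 2,3,4$ and negative for $kr_1\equiv 0,6,7 \pmod 8$ gives $\Omega_+$ and $\Omega_-$. In the residues $1,5$ the phase is $\pm\pi/2$-type and the sign argument fails, which explains the excluded classes.

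Third, I would evaluate the ratio (resonated integral)/(normalizing integral), which by the standard multiplicative computation is
\begin{equation*}
  \prod_{p}\Bigl(1+ \frac{d_K^{(k)}(p)\,f(p)}{p^{\alpha}}\cdots\Bigr)
  \approx \exp\Bigl(\sum_{p\le y}\frac{(k\nu_p)^q}{p\,\text{-scale}}\Bigr).
\end{equation*}
Optimizing gives a gain of $(\log T)^{\eta}(\log_2T)^{\kappa}$ once one has sharp asymptotics for sums like $\sum_{n\le y} (d_K^{(k)}(n))^q \mu^2(n)/n$ and for the count of $n\in\mathcal S$. The earlier papers lost $\log_3$ factors precisely by estimating these crudely via Mertens-type sums over primes in Chebotarev classes. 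The key step is therefore to invoke a counting theorem for nonnegative multiplicative functions of Selberg–Delange type, applied to $g(n)=\mu^2(n)(d_K^{(k)}(n))^q$ and related weights. Its Dirichlet series behaves like $\zeta(s)^{\Theta}$ times a holomorphic factor, since by Chebotarev the average of $(k\nu_p)^q$ is $\Theta$. This yields $\sum_{n\le y} g(n)\sim C y(\log y)^{\Theta-1}$ uniformly, and the factor $(\log_2x)^{\alpha(\Theta-1)}=(\log_2 x)^\kappa$ then appears cleanly without $\log_3$ losses.

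I expect the main obstacle to be the off-diagonal control in the resonance step. One must show that the frequencies $\mathcal D n^{1/\mathcal D}$ from non-matching tuples do not cancel the main term, given that $d_K^{(k)}$ is supported on norms with irregular prime behaviour. One must also make the Chebotarev-weighted Selberg–Delange asymptotics uniform enough in the range $y\approx(\log T)^{A}$. I would first try to mirror Lamzouri's choice of squarefree support with large prime factors, so that the only coincidences are trivial. The counting theorem would then supply both the main term and the upper bound for the tails.
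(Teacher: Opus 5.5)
\textbf{There is a genuine gap: your resonator cannot produce a main term.} This is a structural problem, not a technical off-diagonal one.

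Your $R(t)=\sum_{n\in\mathcal S}r(n)e(\mathcal D n^{1/\mathcal D}t)$, with multiplicative weights on squarefree $n$, is modelled on resonators for Dirichlet series. There the frequencies $\log n$ turn multiplication into addition. Here the frequencies $\lambda_n=\mathcal D n^{1/\mathcal D}$ have no multiplicative structure. In $\int F(t)|R(t)|^2K\,dt$, the term $f(\ell)r(n_1)r(n_2)$ resonates only if $\ell^{1/\mathcal D}+n_1^{1/\mathcal D}=n_2^{1/\mathcal D}$, up to $O(1/\tau)$. By linear independence of $\mathcal D$-th roots of distinct $\mathcal D$-th-power-free integers, an exact relation forces $\ell=b_1^{\mathcal D}c$, $n_1=b_2^{\mathcal D}c$ and $n_2=(b_1+b_2)^{\mathcal D}c$. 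This is impossible for squarefree $n_2$ when $\mathcal D=mk\ge2$. So there is no diagonal at all, and the Euler product in your third step describes nothing.

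This is also not Lamzouri's resonator. The paper takes $\mathcal M$ to be the indices of the $M\asymp\log T$ largest values of $d_K^{(k)}(n)n^{-\alpha}$. It then uses $R(t)=\prod_{n\in\mathcal M}(1-re(\lambda_nt))^{-1}$, whose frequencies form the additive semigroup generated by the $\lambda_n$. The coefficient inequality $b_r(v+\lambda_n)\ge r\,b_r(v)$ gives $I_2\ge\frac r4\sum_{n\in\mathcal M}f(n)\,I_1$ with no independence hypothesis, at the cost of a factor $2^M$ in the error term (Lemma~\ref{lem:resonance}).

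Sign control also needs a device your sketch lacks. A Fej\'er kernel translated to $[T,2T]$ has Fourier transform $\widehat K(\xi)e^{-2\pi i\xi T}$ with uncontrolled phase, so it gives no sign. The paper instead uses the one-sided kernel $K(u)=u^{-3/4}e^{-u}/\Gamma(1/4)$ on $u>0$. Its transform $\widehat K(\xi)=(1+2\pi i\xi)^{-1/4}$ has argument in $(-\pi/8,\pi/8)$. Hence $\Real(e^{i\beta_0}\widehat K)\ge\frac14\Real\widehat K\ge0$ for $\beta_0\in\{-\pi/4,0,\pi/4\}$, and this is exactly what produces the signs in classes $0,2,4,6$.

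\textbf{The arithmetic input you propose is also the wrong one.} The gain is exactly $\sum_{n\in\mathcal M}f(n)$, so you need the sum of the $M$ largest values, $S_K(M)\sim\frac{C^\alpha}{\eta}M^\eta(\log M)^\kappa$ (Proposition~\ref{prop:largest}). This comes from the level-set count $\#\{n:d_K^{(k)}(n)>0,\ n\,d_K^{(k)}(n)^{-q}\le x\}$. Its generating series is $\sum a(n)^{qs}n^{-s}=\zeta(s)^{Z(s)}H(s)$, with the variable exponent $a(n)^{qs}$ and $Z(1)=\Theta$ (Theorem~\ref{thm:count}). It must also allow the zero coefficients $d_K^{(k)}(p)=0$ at primes with no degree-one prime above them. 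A Selberg--Delange asymptotic for $\sum_{n\le y}\mu^2(n)d_K^{(k)}(n)^q$, with fixed exponent $q$, does not determine these level sets. With the correct input and $M\asymp\log T$, the resonance bound gives $(\log T)^\eta(\log_2T)^\kappa$ directly.

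Finally, a sharply truncated Vorono\"i formula with a short sum does not have error $o(x^\eta)$. The paper uses the Gaussian-smoothed formula (Proposition~\ref{prop:voronoi}) and transfers back to $\Delta_K^{(k)}$ via Lemma~\ref{lem:transfer}.
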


For $kr_1\equiv1,5\pmod8$, the earlier bound
\eqref{eq:km-bound} remains available, but our argument does not
remove its factor involving $\log_3x$.
For $K=\mathbb Q$ and $k=2$, and for $K=\mathbb Q(i)$ and $k=1$,
the theorem gives Lamzouri's bounds.

To explain the comparison with earlier signed results, recall the
bound of Szeg\H{o} and Walfisz~\cite{sw1,sw2},
\begin{equation*}
  \Delta_K^{(k)}(x)
  =\Omega_*\!\left((x\log x)^\eta(\log_2x)^{k-1}\right),
\end{equation*}
and Hafner's refinement~\cite{hafner83},
\begin{equation*}
  \Delta_K^{(k)}(x)
  =\Omega_*\!\left(
    (x\log x)^\eta(\log_2x)^{\kappa'}
    \exp\!\left(-c_{K,k}\sqrt{\log_3x}\right)
  \right),
\end{equation*}
where $c_{K,k}>0$ and
\begin{equation*}
  \kappa'
  =\eta\left(
    k\log k+k\sum_{\nu=1}^m\delta_\nu\nu\log\nu-k+1
  \right)+k-1.
\end{equation*}
In the above two results,
\begin{equation*}
  \Omega_*=
  \begin{cases}
    \Omega_\pm,
      &\text{if }mk\ge4\text{ or }K\text{ is cubic and not totally real},\\
    \Omega_-,
      &\text{if }k=1\text{ and }K\text{ is imaginary quadratic},\\
    \Omega_+,
      &\text{if }K=\mathbb Q\text{ and }k=2,3,\\
    \Omega_+,
      &\text{if }k=1,\ m=2,3,\text{ and }K\text{ is totally real}.
  \end{cases}
\end{equation*}
Thus, in several cases, earlier results already give large values of
both signs. Since $\kappa>\kappa'$, these results concern smaller
scales than $(x\log x)^\eta(\log_2x)^\kappa$. Our signed bounds give one specified sign
at the larger scale.

The main step is to estimate the sum of the largest
positive values of $d_K^{(k)}(n)n^{-\alpha}$. For this purpose,
we prove a counting theorem for nonnegative multiplicative functions
whose values at primes depend on the Frobenius conjugacy class.
This extends the approach of Balasubramanian and Ramachandra~\cite{br},
as used by Lamzouri~\cite{lamzouri}, to the prime-splitting conditions
that arise over a general number field. The counting theorem allows
zero coefficients and gives an asymptotic formula for the sum of
the largest positive coefficients. Then we combine this estimate
with a smoothed Vorono\"i formula and Lamzouri's resonance argument.

The phase in the Vorono\"i formula is
$\beta=(kr_1-3)\pi/4$, which explains the sign conditions in
Theorem~\ref{thm:main}. The earlier arguments in~\cite{gkmn,km}
usually give a lower bound for the absolute value, but they give
a specified sign when $\beta$ is $0$ or $\pi$ modulo $2\pi$,
corresponding to classes $3$ and $7$.
Lamzouri's argument also applies when multiplying the expression
by $+1$ or $-1$ as needed reduces the phase to $-\pi/4$, $0$,
or $\pi/4$, and the zero-phase case is included in
Lemma~\ref{lem:resonance} below. For the two remaining classes,
$kr_1\equiv1,5\pmod8$, the phases are $\pm\pi/2$, where the
positivity condition fails, so this argument does not give a bound
at the scale $(x\log x)^\eta(\log_2x)^\kappa$ in those cases.
\section*{Acknowledgements}
		
Karak is supported by the Prime Minister's Research Fellowship (PMRF), Government of India (PMRF ID: 2403449). Mahatab is supported by the ARG-MATRICS program, Government of India (Grant No.\ ANRF/\allowbreak ARGM/\allowbreak 2025/\allowbreak 002540/\allowbreak MTR).

\section{A counting theorem for Frobenius-dependent multiplicative functions} 
Let $L/\mathbb Q$ be a finite Galois extension with Galois group $G$. For a rational prime $p$, we choose a prime ideal $\mathfrak p$ of $L$ lying above $p$, and let $D_{\mathfrak p}$ and $I_{\mathfrak p}$ denote the corresponding decomposition and inertia groups. If $I_{\mathfrak p}$  is trivial, the rational prime $p$ is unramified in $L$. In this case, the Frobenius element $\Frob_{\mathfrak p}\in D_{\mathfrak p}$ is well defined, and as $\mathfrak p$ varies over the primes above $p$, these elements form a single conjugacy class in $G$. We denote this conjugacy class by $\Frob_p$. 

\begin{theorem}\label{thm:count}
Let $a:\N\to [0,\infty)$ be a multiplicative function with $a(1)=1$. Assume that the following conditions hold:
\begin{enumerate}[label=(\roman*)]
\item  $a(n)\ll_\varepsilon n^\varepsilon$, \,for every $\varepsilon>0$;
\item there exist positive constants $C,D$ such that $a(p^j)\le C(j+1)^D$ for all primes $p$ and integers $j\ge1$;
\item there exist a finite Galois extension $L/\Q$ with $G=\Gal(L/\Q)$, and a nonnegative class function $\nu$, not identically zero, such that $a(p)=\nu(\Frob_p)$ for all but finitely many primes $p$. 
\end{enumerate}
For $q>0$, define 
\begin{equation*}
\theta_q=\frac1{|G|}\sum_{g\in G}\nu(g)^q>0.
\end{equation*}
Then, as $x\to \infty$, 
\begin{equation}\label{eq:count}
\left|\{n: a(n)>0, na(n)^{-q}\le x\}\right|\sim \frac{H(1)}{\Gamma(\theta_q)}x(\log x)^{\theta_q-1},
\end{equation}
where $H$ is analytic near $1$ and $H(1)>0$. The construction of $H$ is given in the proof.
\end{theorem}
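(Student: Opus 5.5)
The plan is to apply the Selberg--Delange method to the Dirichlet series
\begin{equation*}
F(s)=\sum_{\substack{n\ge1\\ a(n)>0}}\frac{1}{(n a(n)^{-q})^{s}}=\sum_{n:\,a(n)>0}\frac{a(n)^{qs}}{n^{s}}.
\end{equation*}
The counting function in \eqref{eq:count} is then $\sum_{\lambda_n\le x}1$ over the generalized sequence $\lambda_n=na(n)^{-q}$. Note that $\lambda_n\ge n^{1-q\varepsilon}$ by (i), so $\lambda_n\to\infty$. Since $a$ is multiplicative and nonnegative, the indicator of $a(n)>0$ is multiplicative, and so is $n\mapsto a(n)^{qs}\mathbf 1_{a(n)>0}$. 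Hence $F$ has an Euler product
\begin{equation*}
F(s)=\prod_p\Bigl(1+\sum_{j\ge1,\ a(p^j)>0}\frac{a(p^j)^{qs}}{p^{js}}\Bigr).
\end{equation*}
By (ii), the terms with $j\ge2$ contribute $\ll\sum_p C'(\log)^{O(1)}p^{-2\sigma}$. This converges absolutely for $\sigma>1/2+\epsilon$, uniformly for $s$ in bounded regions of $\sigma$, because $a(p^j)^{q\sigma}\le (C(j+1)^D)^{q\sigma}$ grows only polynomially in $j$. For the linear terms we use (iii). For all but finitely many $p$, $a(p)^{qs}=\nu(\Frob_p)^{qs}$, interpreted as $0$ when $\nu(\Frob_p)=0$.

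Next I would decompose the class function. Write $\nu^{qs}=\sum_\chi c_\chi(s)\chi$ over the irreducible characters $\chi$ of $G$. Here $c_\chi(s)=\frac1{|G|}\sum_g\nu(g)^{qs}\overline{\chi(g)}$ is entire in $s$, with $c_{1}(1)=\theta_q$. Then
\begin{equation*}
F(s)=\prod_\chi L(s,\chi,L/\Q)^{c_\chi(s)}\,H_0(s),
\end{equation*}
where $H_0$ is holomorphic and nonvanishing in $\sigma>1/2+\epsilon$. It absorbs the finitely many exceptional primes, the ramified primes, the $j\ge2$ terms, and the discrepancy $\log(1+x)-x$. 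To get a Selberg--Delange shape, the exponent must be constant, so I would write
\begin{equation*}
F(s)=\zeta(s)^{\theta_q}G(s),\qquad G(s)=\zeta(s)^{c_1(s)-\theta_q}\prod_{\chi\ne1}L(s,\chi)^{c_\chi(s)}H_0(s).
\end{equation*}
Near $s=1$, the factor $\zeta(s)^{c_1(s)-\theta_q}=\exp\bigl((c_1(s)-c_1(1))\log\zeta(s)\bigr)$. This is the delicate point, since $(s-1)\log(s-1)$ is not analytic. So I would instead apply the Selberg--Delange method in its general form, where the exponent may depend on $s$; equivalently, expand $c_1(s)-c_1(1)$ as $O(|s-1|)$. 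This yields only a factor $1+O(|s-1|\log)$, contributing a lower-order term $x(\log x)^{\theta_q-2}\log\log x$. That suffices for an asymptotic, with $H(1)=G(1)=\prod_{\chi\neq1}L(1,\chi)^{c_\chi(1)}H_0(1)>0$. Positivity of $H(1)$ holds because each Euler factor is positive at real $s=1$, and Artin $L$-functions at $s=1$ are nonzero, with real exponents when the product is regrouped over conjugate characters.

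For the analytic input I need $L(s,\chi)$ zero-free in a classical region $\sigma\ge1-c/\log(|t|+2)$, with polynomial growth there. Artin $L$-functions are not known to be entire, so I would use Brauer induction to write each as a product of integer powers of Hecke $L$-functions. Hecke $L$-functions have standard zero-free regions and are meromorphic with at most the pole from the trivial character, so the complex powers are well defined via logarithms in that region. The Hankel contour argument of Tenenbaum (Selberg--Delange, Theorem II.5.2) then gives $\sum_{\lambda_n\le x}1\sim \frac{G(1)}{\Gamma(\theta_q)}x(\log x)^{\theta_q-1}$. Here I would use Perron's formula for generalized Dirichlet series, with the sum over $\lambda_n$ in place of $n$. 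Since $a(n)^{qs}$ has modulus $a(n)^{q\sigma}\ll n^{\varepsilon}$ by (i), the truncation errors are controlled.

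The main obstacle will be the $s$-dependence of the exponents $c_\chi(s)$ and the uniform bounds of $|L(s,\chi)^{c_\chi(s)}|$ for large $|t|$, where $\Real c_\chi(s)$ can be large. Terms like $\nu(g)^{qit}$ oscillate, so $|c_\chi(s)|$ stays bounded for $\sigma$ near $1$, but $\log L$ is only $O(\log\log|t|)$ in the zero-free region. The factor is therefore $\exp(O(\log_2|t|))$, which is polynomially bounded, and that is enough for Perron truncation at height $T=\exp(\sqrt{\log x})$.
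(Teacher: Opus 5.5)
Your architecture matches the paper's. You use the same series $\sum_{a(n)>0}a(n)^{qs}n^{-s}$ and expand $\nu^{qs}$ in irreducible characters with $s$-dependent coefficients $c_\chi(s)$. You use Brauer induction for the zero-free region and growth bounds. You treat $\zeta(s)^{c_1(s)}$ near $1$ as $w^{-\theta_q}\bigl(1+O(|w|(1+|\log w|))\bigr)$ on a Hankel contour.

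The gap is in the last step, where you use the sharp truncated Perron formula and invoke Tenenbaum's Theorem II.5.2. For the generalized series $\sum_n\lambda_n^{-s}$ with $\lambda_n=na(n)^{-q}$, the truncation error is
\[
\ll\sum_n\Bigl(\frac{x}{\lambda_n}\Bigr)^{b}\min\Bigl(1,\frac{1}{T\,|\log(x/\lambda_n)|}\Bigr),
\]
and its critical part is the number of $n$ with $|\lambda_n-x|\le x/T$. All coefficients here equal $1$, so the bound $a(n)^{q\sigma}\ll n^{\varepsilon}$ you cite is irrelevant. What is needed is a short-interval upper bound for the distribution of the real numbers $\lambda_n$, which can repeat and cluster. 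Theorem II.5.2 gets this for free because its frequencies are integers. Nothing in (i)--(iii) supplies it here, and the trivial bound by the count up to $2x$ is of the same order as the main term. So, as written, the Perron step does not go through.

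Since only an asymptotic is claimed and the counting function is monotone, the standard repair is the paper's route. Apply Perron with the kernel $x^s/s^{J+1}$ to the Riesz means $A_J(x)=\frac1{J!}\sum_{\lambda_n\le x}(\log(x/\lambda_n))^J$. There the integral converges absolutely, and by positivity, $|F(b+it)|\le F(b)$, the part beyond height $T$ is $O(x(\log x)^{\theta_q}T^{-J})$. After the Hankel analysis, recover $A_0$ from $A_J$ by repeated differencing. This uses $U_j(v+h)-U_j(v)=\int_v^{v+h}U_{j-1}(u)\,du$, with each $U_j(v)=A_j(e^v)$ nondecreasing. A Tauberian theorem for nonnegative coefficients would also work.

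Two smaller remarks:
\begin{enumerate}
\item $H_0$ need not be nonvanishing in $\sigma>1/2+\epsilon$, since the finitely many local factors at exceptional primes can have complex zeros there. This is harmless, because you only need its holomorphy, boundedness on strips, and $H_0(1)>0$.
\item Your pairing of conjugate characters does give $H(1)>0$. The paper obtains it more simply from $H(1)=\lim_{w\to0^+}w^{\theta_q}F(1+w)\ge0$ together with $H(1)\neq0$.
\end{enumerate}
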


\begin{proof}
For every $n$ with $a(n)>0$, define $b_n=na(n)^{-q}$. Our aim is to calculate $A_0(x)=\#\{n:a(n)>0,\;b_n\leq x\}$.
Choosing $0<\varepsilon<1/q$, assumption~(i) gives $b_n\geq C_\varepsilon^{-q}n^{1-q\varepsilon}$ which tends to infinity as $n$ tends to infinity along the support of $a$. Thus $A_0(x)$ is
finite for each  $x>0$.
  For $s=\sigma+it$, define
\begin{equation} \label{eq:Dirichlet}
\F(s):=\sum_{\substack{n\geq1\\a(n)>0}}b_n^{-s} =\sum_{\substack{n\geq1\\a(n)>0}}a(n)^{qs}n^{-s}.
\end{equation}
Choosing $0<\varepsilon< (\sigma-1)/ q\sigma$ in assumption $(i)$ shows that the Dirichlet series in \eqref{eq:Dirichlet} converges absolutely in $\Real s>1$.  Since $a(n)$ is multiplicative, $\mathcal{F} = \prod_p \mathcal{F}_p$, where
\begin{equation}
    \mathcal{F}_p(s) = \left( 1+ \sum_{\substack{j\ge1\\
        a(p^j)>0}} a(p^j)^{qs}p^{-js}\right),  \qquad \text{for} \ \Real s>1.
\end{equation}

We first separate the contribution of the prime terms. Let $S$ be a finite set containing all ramified primes in $L$ and all exceptions in $(iii)$. Define 
\begin{equation*}
   h_s(g)=\begin{cases}\exp(qs\log \nu(g)),&\nu(g)>0,\\0,&\nu(g)=0.\end{cases}.
\end{equation*}
On a strip $0<\sigma_1\leq \sigma \leq\sigma_2$, the assumption $(ii)$ implies
\begin{equation*}
    \sum_{\substack{j\ge 2\\
    a(p^j)>0}} \left|a(p^j)^{qs} p^{-js}\right| \le p^{-2\sigma } \sum_{j\ge 2}\max\{1, C(j+1)^D\}^{q\sigma_2} 2^{-(j-2)\sigma_1} \ll p^{-2\sigma}.
\end{equation*}
Consequently, uniformly in $t$ on every such strip and for $p\notin S$,  we have
\begin{equation}\label{eq:simplified-series}
\mathcal{F}_p(s)=1 +h_s(\operatorname{Frob}_p)p^{-s} + O(p^{-2\sigma}).
\end{equation}
Since the irreducible complex characters $\widehat G$ form an orthonormal
basis of the class functions,
\begin{equation*}
h_s=\sum_{\chi\in\widehat G}c_\chi(s)\chi,\qquad
c_\chi(s)=\frac1{|G|}\sum_{g\in G}h_s(g)\overline{\chi(g)}.
\end{equation*}
Here each $c_\chi$ is entire. Since $|h_s(g)|=\nu(g)^{q\sigma}$ when
$\nu(g)>0$, these finite sums are bounded on  vertical strips,
independently of $t$.
The trivial-character coefficient is
\begin{equation*}
Z(s)=c_1(s)=\frac1{|G|}
       \sum_{\substack{g\in G\\\nu(g)>0}}\nu(g)^{qs}
  \quad \text{with}\quad Z(1)=\theta_q.
\end{equation*}

Let $V$ be a finite-dimensional vector space over $\mathbb C$ and $\rho_{\chi}:G\to \operatorname{GL}(V)$  a representation of $G$ with character $\chi$. 
For a prime $p\notin S$, the local Artin factor is 
\begin{equation*}
    L_p(s,\chi)=\det\!\left(I-\rho_\chi(\Frob_p)p^{-s}\right)^{-1}.
\end{equation*}
Since the determinant is invariant under conjugation, this definition is independent of the choice of $\mathfrak p\mid p$. Hence, for $\Real s >1$, the Artin $L$-function is $L(s,\chi)= \prod_p L_p(s,\chi)$, where at the finitely many ramified primes the usual local factors involving the inertia-invariant subspace $V^{I_{\mathfrak p}}$ are used.
Because $G$ is finite, its representation matrices have eigenvalues
that are roots of unity. For $\Real s>1$, we have
\begin{equation*}
    \log L_p(s,\chi)=\sum_{m\geq1}
       \frac{\chi(\Frob_p^m)}{m\,p^{ms}}.
\end{equation*}
Taking out the term $m=1$, for $s=\sigma+it$, gives $\log L_p(s,\chi)=\chi(\Frob_p)p^{-s}+O(p^{-2\sigma})$.
For every rational prime $p$, define
\begin{equation}\label{eq:def_E_p}
    \mathcal{E}_p(s):= \mathcal{F}_p(s) \exp\left( - \sum_{\chi\in\widehat G}c_\chi(s) \log L_p(s, \chi)\right).
\end{equation}
For $p\notin S$, let $B_p(s)=\sum_\chi c_\chi(s)\log L_p(s,\chi)$.
Then $  B_p(s)=h_s(\Frob_p)p^{-s}+O(p^{-2\sigma})$ and 
  $e^{-B_p(s)}=1-h_s(\Frob_p)p^{-s}+O(p^{-2\sigma})$.
Multiplying by~\eqref{eq:simplified-series} gives
$\mathcal{E}_p(s)=1+O(p^{-2\sigma})$. Therefore $\mathcal{E}(s)=\prod_p\mathcal{E}_p(s)$ converges uniformly and is analytic in $\sigma>1/2$.
It is bounded on each closed strip
$1/2<\sigma_1\leq\sigma\leq\sigma_2$. The finitely many factors with $p\in S$ are also analytic and bounded on such strips. Here the trivial Artin factor is the Riemann zeta function $\zeta(s)$. Therefore, for $\Real s>1$, we obtain 
\begin{equation}\label{eq:zeta-factor}
    \mathcal{F}(s)= \mathcal{E}(s) \exp\left(\sum_{\chi\in\widehat G}c_\chi(s) \log L(s, \chi)\right) = \zeta(s)^{Z(s)} H(s),
\end{equation}
where 
\begin{equation} \label{def:analytic-H}
    H(s)= \mathcal{E}(s) \exp\left(\sum_{\chi \neq 1}c_\chi(s) \log L(s, \chi)\right).
\end{equation}

We now use the fixed-field analytic continuation described in \cite[(17)--(20), pp.~196--197]{gkmn}. Using  Brauer induction and the classical zero-free region for Hecke $L$-functions, there exists a constant $c>0$ such that, in
\begin{equation}\label{eq:region}
 \mathcal D_c=\left\{\sigma+it:\sigma>1-\frac{c}{\log(|t|+3)}\right\},
\end{equation}
the nontrivial irreducible Artin factors are holomorphic and nonzero. The trivial factor is $\zeta(s)$ and has a simple pole at $1$. Since the field is fixed, we may reduce $c$ to exclude any exceptional real zero and to ensure $\mathcal D_c\subset\{\Real s>1/2\}$. On a smaller region of the same form, the logarithms of the Artin factors satisfy bounds that give polynomial growth in $|t|$.
We choose the logarithms by continuation from the Euler products in $\Real s>1$, and make a slit to the left of $1$ when defining the power of $\zeta$. Note that for nonreal characters, these logarithms need not be real on the real axis. Thus we have
\begin{equation*}
H(s)=\mathcal{E}(s)\prod_{\chi\ne1}L(s,\chi)^{c_\chi(s)},
\end{equation*}
Thus $H$ is analytic near $1$, and there exists a constant $C_0$ such that
\begin{equation}\label{eq:growth}
|\mathcal{F}(\sigma+it)|\ll(|t|+3)^{C_0}
\end{equation}
for $|t|\geq1$ in a suitable smaller region and a bounded strip of real parts. Here we use the boundedness of $c_\chi(s)$ on vertical strips. For every rational prime $p$,
\begin{equation*}
\F_p(1)=1+\sum_{\substack{j\geq1\\a(p^j)>0}}
                 \frac{a(p^j)^q}{p^j}>0.
\end{equation*}

Thus, by \eqref{eq:def_E_p}, $\mathcal{E}_p(1)\neq0$.
Since $\sum_{p\notin S}|\mathcal E_p(1)-1|<\infty$,
the product \(\prod_{p\notin S}\mathcal E_p(1)\) converges to a nonzero limit. As the finitely many remaining factors are also nonzero, we obtain \(\mathcal E(1)\neq0\). Since the nontrivial Artin factors are nonzero at $1$,
\eqref{def:analytic-H} gives $H(1)\neq0$.

Now, let $w=s-1$, with $-\pi<\arg w<\pi$. Since $Z(1+w)=\theta_q+O(w), H(1+w)=H(1)+O(w)$ and  $w\zeta(1+w)=1+O(w)$,
we have
\begin{equation}\label{eq:singularity}
\mathcal{F}(1+w)=H(1)w^{-\theta_q}\left(1+O\left(|w|(1+|\log w|)\right)\right).
\end{equation}
In the above error term, the logarithm comes from $w^{-Z(1+w)}=w^{-\theta_q}\exp\left(-(Z(1+w)-\theta_q)\log w\right)$.
Finally, letting $w$ tend to zero through positive real values gives $H(1) = \lim_{w\to 0}w^{\theta_q}\mathcal{F}(1+w)>0$, because the terms of $\mathcal{F}(1+w)$ are positive and the limit is nonzero.

Choose an integer $J>C_0+3$, and define
\begin{equation*}
A_J(x):=\frac1{J!}\sum_{\substack{b_n\le x,\\ a(n)>0}}\left(\log\frac{x}{b_n}\right)^J.
\end{equation*}
For $b=1+1/\log x$, the inverse Mellin transformation gives 
\begin{equation}\label{eq:mellin}
A_J(x)=\frac1{2\pi i}\int_{b-i\infty}^{b+i\infty}\mathcal{F}(s)\frac{x^s}{s^{J+1}}\,ds.
\end{equation}
The above identity holds for the positive real numbers $b_n$, whether or not they are integers. Write $\ell=\log x$ and $T=\exp(\sqrt\ell)$. By positivity and \eqref{eq:singularity}, on the line $\Real s= b$, we have $|\mathcal{F}(b+it)|\le \mathcal{F}(b)\ll\ell^{\theta_q}$. Truncating \eqref{eq:mellin} at height $\pm T$, the tail parts contribute $O(x\ell^{\theta_q} T^{-J})$. 
Now we move the truncated line to $\sigma_0=1-{c_1}/{\log(T+3)}$, where $0<c_1<c$, with a Hankel detour around $1$ along the banks of $[\sigma_0, 1-\ell^{-1}]$ and circle $|s-1|= \ell^{-1}$. The rectangle is inside $\mathcal{D}_c$, since $\log(|t|+3)\leq \log(T+3)$ for $|t|\leq T$. For large $x$, the detour lies in the disk of \eqref{eq:singularity} and $\sigma_0>0$, the pole of $s^{-J-1}$ at $0$ is not crossed.

 The horizontal segments contribute $O(xT^{C_0-J-1})$. On the left vertical line with $1\le |t| \le T$, the bound \eqref{eq:growth} implies
 \begin{equation*}
    \ll x^{\sigma_0} \int_1^T (t+3)^{C_0}t^{-J-1} dt \ll x^{\sigma_0}.
 \end{equation*}
For $|t|\leq 1$, we use \eqref{eq:singularity} near $1$ and compactness away from it. Since $|s-1|\ge 1- \sigma_0$ on this line, that part is $O(x^{\sigma_0}\ell^{C_4})$ for some  $C_4$. The total contribution is $O\left(xe^{-c_2\sqrt\ell}\right)$
for $c_2>0$.

Let $\mathcal H_x$ be the remaining contour, oriented from $\sigma_0$ on the
lower bank towards the small circle, counterclockwise around $1$, and
back along the upper bank. We have 
\begin{equation}\label{eq:Hankel}
    A_J(x)
=
\frac{1}{2\pi i}
\int_{\mathcal H_x}
\mathcal F(s)\frac{x^s}{s^{J+1}}\,ds
+
O\!\left(xe^{-c_2\sqrt{\ell}}\right).
\end{equation}

On $\mathcal{H}_x$, let $s=1+w$. Since $J$ is fixed, $(1+w)^{-J-1}=1+O(|w|)$, \eqref{eq:singularity} gives 
\begin{equation}\label{eq:FtoH}
    \mathcal F(1+w)(1+w)^{-J-1}
= H(1)w^{-\theta_q}
+ O\!\left(
|w|^{1-\theta_q}(1+|\log w|)
\right)
\end{equation}
Using $x^{1+w}=xe^{\ell w}$ and putting $z=\ell w$, the integral in \eqref{eq:Hankel} becomes 
\begin{equation*}
    H(1)x\ell^{\theta_q-1}
\frac{1}{2\pi i}
\int_{\ell(\mathcal H_x-1)}
e^z z^{-\theta_q}\,dz.
\end{equation*}

The rescaled circle has radius $1$, and its rays end at $-(1-\sigma_0)\ell$ with
$(1-\sigma_0)\ell\asymp\sqrt{\ell}$ which tends to infinity as $\ell$ tends to infinity. The tails added by extending those rays to $-\infty$ are bounded by $\int_{(1-\sigma_0) \ell}^{\infty} e^{-u}u^{-\theta_q}\, du$, which is exponentially small in $(1-\sigma_0) \ell$ and therefore in $\sqrt{\ell}$.

Let $\mathcal{H}$ run from $-\infty$ on the lower bank, counterclockwise around a positive radius circle, and back to $-\infty$ on the upper bank, with $-\pi < \operatorname{arg} z< \pi$. Then, for $\theta_q>0$, one has
\begin{equation*}
    \frac{1}{2\pi i}
\int_{\mathcal H}
e^z z^{-\theta_q}\,dz
=\frac{1}{\Gamma(\theta_q)}.
\end{equation*}

On each ray, take $w=-r\pm i0$, with $\ell^{-1}\leq r \leq 1- \sigma_0$. By \eqref{eq:FtoH}, the error on the rays is 
\begin{align*}
 \operatorname{E}_{\text{rays}}  \ll x\int_{1/\ell}^{1-\sigma_0}
e^{-r\ell}r^{1-\theta_q}
(1+|\log r|)\,dr
&\leq
x\ell^{\theta_q-2}
\int_1^{\ell(1-\sigma_0)}
e^{-u}u^{1-\theta_q}
(1+\log\ell+\log u)\,du\\
&\ll x\ell^{\theta_q-2}(1+\log\ell),
\end{align*}
where $u=\ell r$.
It remains to find the error on the circle.  On the circle $|w|=\ell^{-1}$, $|e^{\ell w}|\leq e$ and $|w|^{1-\theta_q}(1+|\log w|) \ll \ell^{\theta_q-1}(1+\log\ell)$. Multiplying by its length $2\pi/\ell$ gives $\operatorname{E}_{\text{circle}} \ll x\ell^{\theta_q-2}(1+\log\ell)$. Combining all the above estimates yields 
\begin{equation}\label{eq:smoothed}
A_J(x)=\frac{H(1)}{\Gamma(\theta_q)}x(\log x)^{\theta_q-1}\left(1+O\left(\frac{\log_2 x}{\log x}\right)\right).
\end{equation}

We now recover $A_0(x)=\#\{n: a(n)>0,\, b_n\le x\}$. Define $A_j$ similarly for $1\le j\le J$ and let $U_j(v)=A_j(e^v)$. We have
\begin{equation*}
U_j(v+h)-U_j(v)=\int_v^{v+h}U_{j-1}(u)\,du.
\end{equation*}
Since $U_{j-1}$ is nondecreasing, for every  $h>0$,
\begin{equation}\label{eq:unsmoothing}
\frac{U_j(v)-U_j(v-h)}h\le U_{j-1}(v)\le\frac{U_j(v+h)-U_j(v)}h.
\end{equation}
Suppose that $U_j(v)\sim C'e^vv^{\theta_q-1}$. Divide \eqref{eq:unsmoothing} by $C'e^vv^{\theta_q-1}$ and let $v\to\infty$. The lower and upper bounds tend to $(1-e^{-h})/h$ and $(e^h-1)/h$, respectively. Letting $h$ tend to $0$, we obtain $U_{j-1}(v)\sim C'e^vv^{\theta_q-1}$. Starting with $C'=H(1)/\Gamma(\theta_q)$ from \eqref{eq:smoothed} and repeating from $j=J$ to $j=1$  proves the theorem.
\end{proof}

\subsection{Application to divisor coefficients and their largest values}
We now apply Theorem~\ref{thm:count} to the coefficients \(d_K^{(k)}(n)\) defined in \eqref{eq:divisor-coefficients}. At a rational prime \(p\), the Euler factor gives
\begin{equation}\label{eq:ideals}
\sum_{j\geq0}d_K^{(k)}(p^j)z^j
=\prod_{\mathfrak p\mid p}(1-z^{f_{\mathfrak p}})^{-k},
\end{equation}
where \(f_{\mathfrak p}\) denotes the residue degree of \(\mathfrak p\). In particular, $d_K^{(k)}(n)$ is non-negative and multiplicative.

We first record the required bounds for its coefficients. We label the prime ideals above $p$, together with each of the $k$ copies of their Euler factors. A contribution to the coefficient of $z^j$ in \eqref{eq:ideals} corresponds to a tuple $(b_i)$ of non-negative integers satisfying $\sum_i f_i b_i=j$.
The map $(b_i)\longmapsto(f_i b_i)$
is injective, and its image is contained in the set of tuples of non-negative integers whose sum is $j$. If $g$ is the number of prime ideals above $p$, then there are $kg\leq km$ entries. Hence
\begin{equation}\label{eq:divisorbound}
d_K^{(k)}(p^j)
\leq \binom{j+mk-1}{mk-1}
\ll (j+1)^{mk-1}.
\end{equation}
This argument also applies to ramified primes. By multiplicativity, $d_K^{(k)}(n)\leq d_{mk}(n)\ll_\varepsilon n^\varepsilon$,
where $d_{mk}$ denotes the $mk$-fold divisor function. Thus assumptions {\rm (i)} and {\rm (ii)} of Theorem~\ref{thm:count} are satisfied.

It remains to verify assumption {\rm (iii)}. Let $L$ be the Galois closure of $K/\mathbb Q$ and put $G=\operatorname{Gal}(L/\mathbb Q)$.
The group $G$ acts transitively on the $m$ embeddings of $K$ into $L$. Let $\nu(g)$ denote the number of fixed points of $g$ in this action. Then $\nu$ is a nonnegative class function on $G$, and $\nu(1)=m>0$. For every unramified prime $p$ in $L$, the fixed points of \(\operatorname{Frob}_p\) correspond to the degree-one prime ideals of $K$ above $p$. Taking the coefficient of $z$ in \eqref{eq:ideals} therefore gives $d_K^{(k)}(p) =k\nu(\operatorname{Frob}_p)$.
Thus assumption {\rm (iii)} holds with the class function $g\mapsto k\nu(g)$.

We may therefore apply Theorem~\ref{thm:count} with \(q\) as in \eqref{eq:R-eta}. Set $\Theta
=|G|^{-1}\sum_{g\in G}(k\nu(g))^q$.
Then, as \(x\to\infty\),
\begin{equation}\label{eq:applicationcount}
A(x):=
\left|\left\{n:d_K^{(k)}(n)>0,\;
n\,d_K^{(k)}(n)^{-q}\leq x\right\}\right|
\sim \frac{H(1)}{\Gamma(\Theta)} x(\log x)^{\Theta-1},
\end{equation}

Note that $\Theta>1$. Since the action of \(G\) is transitive, Burnside's lemma gives
$|G|^{-1}\sum_{g\in G}\nu(g)=1$.
Since $q>1$, Jensen's inequality yields
$|G|^{-1}\sum_{g\in G}\nu(g)^q\geq1$,
and consequently $\Theta\geq k^q$.
This is strictly greater than $1$ when $k\ge2$. If $k=1$ and $m>1$, then $\nu$ is not constant, since the identity fixes all $m$ points. Using the strict convexity of \(t\mapsto t^q\), we get
$|G|^{-1}\sum_{g\in G}\nu(g)^q>1$,
so that $\Theta>1$ also in this case.

\begin{proposition}\label{prop:largest}
Let $S_K(M)$ be the sum of the largest $M$ positive values of
$d_K^{(k)}(n)n^{-\alpha}$. As $M\to\infty$,
\begin{equation}\label{eq:largest}
 S_K(M)\sim\frac{C^\alpha}{\eta}M^\eta(\log M)^\kappa,
\end{equation}
 where $C=H(1)/\Gamma(\Theta)>0$.
\end{proposition}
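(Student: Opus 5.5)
Order the positive values by size: $d_K^{(k)}(n)n^{-\alpha}\ge y$ is equivalent to $n\,d_K^{(k)}(n)^{-q}\le y^{-q}$, since $q=1/\alpha$. So, with $A$ as in \eqref{eq:applicationcount}, the number of positive values that are at least $y$ equals $A(y^{-q})$. The plan is to write $S_K(M)$ as a Stieltjes integral against this counting function, evaluate it with \eqref{eq:applicationcount}, and then invert the relation between $M$ and the threshold $y$.

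\textbf{Step 1 (threshold).} Let $y_M$ be the $M$-th largest value, and set $X=y_M^{-q}$. Ties are harmless, because $A(X)\ge M\ge A(X^-)$ and the asymptotic in \eqref{eq:applicationcount} is continuous in $X$. This gives $M\sim CX(\log X)^{\Theta-1}$, and inverting,
\[
X\sim \frac{M}{C}(\log M)^{1-\Theta}, \qquad \log X\sim\log M .
\]

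\textbf{Step 2 (the sum).} Write
\[
S_K(M)=\sum_{b_n\le X} b_n^{-\alpha}+O\bigl(\text{ties}\cdot y_M\bigr),
\]
where $b_n=n\,d_K^{(k)}(n)^{-q}$. The correction is harmless since $M y_M$ will turn out to be of smaller order, as shown below. By partial summation,
\[
\sum_{b_n\le X}b_n^{-\alpha}=A(X)X^{-\alpha}+\alpha\int_{b_{\min}}^X A(u)u^{-\alpha-1}\,du .
\]
Insert $A(u)\sim Cu(\log u)^{\Theta-1}$. Because $1-\alpha=\eta>0$, the integral is dominated by $u$ near $X$ and equals
\[
\sim \frac{\alpha C}{\eta}X^{\eta}(\log X)^{\Theta-1}.
\]
This uses the standard fact that $\int^X u^{\eta-1}(\log u)^{\beta}\,du\sim \eta^{-1}X^\eta(\log X)^\beta$, with the contribution of small $u$ absorbed into the $o(\cdot)$ term. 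Adding the boundary term $CX^{\eta}(\log X)^{\Theta-1}$ gives in total
\[
C\Bigl(1+\frac{\alpha}{\eta}\Bigr)X^\eta(\log X)^{\Theta-1}=\frac{C}{\eta}X^\eta(\log X)^{\Theta-1},
\]
since $\alpha+\eta=1$. This also shows that $My_M\asymp X^{\eta}(\log X)^{\Theta-1}$ has the same order, so the tie correction must be handled exactly rather than bounded crudely. I would therefore define $S_K(M)$ via $A(X)X^{-\alpha}$ with $A(X)$ replaced by $M$. Because $y_M=X^{-\alpha}$ exactly and $M\sim A(X)$, the boundary term still gives $\sim CX^\eta(\log X)^{\Theta-1}$.

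\textbf{Step 3 (substitute).} From Step 1,
\[
X^\eta\sim C^{-\eta}M^\eta(\log M)^{\eta(1-\Theta)} \quad\text{and}\quad (\log X)^{\Theta-1}\sim(\log M)^{\Theta-1}.
\]
Hence
\[
S_K(M)\sim \frac{C^{1-\eta}}{\eta}M^\eta(\log M)^{(\Theta-1)(1-\eta)}=\frac{C^\alpha}{\eta}M^\eta(\log M)^{\kappa},
\]
because $1-\eta=\alpha$ and $\kappa=\alpha(\Theta-1)$.

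\textbf{Main obstacle.} The main care point is the justification of the Abelian step in Step 2: the only input is an asymptotic (not an error term) for $A$, and we must integrate it against $u^{-\alpha-1}$ near the lower end. This works because $A(u)u^{-\alpha-1}$ is integrable near finite $u$, and the weight $u^{\eta-1}$ with $\eta>0$ makes the integral regularly varying and dominated by its top range. The tie issue and the inversion in Step 1 are routine because $A$ is regularly varying with index $1$.
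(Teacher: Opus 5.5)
Your proposal is correct and is essentially the paper's argument after the change of variables $X=y^q$: the paper applies partial summation to $N_K(y)=A(y^q)$, with $y=n^\alpha/d_K^{(k)}(n)$, to get $B_K(y)\sim N_K(y)/(\eta y)$. It then inverts $M\sim N_K(y_M)$ for the threshold $y_M=n_M^\alpha/d_K^{(k)}(n_M)$ (the reciprocal of your $y_M$), and handles ties via the sandwich $B_K((1-\rho)y_M)\le S_K(M)\le B_K(y_M)$.

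Your tie handling also works, because $S_K(M)=MX^{-\alpha}+\alpha\int_0^X A(u)u^{-\alpha-1}\,du$ holds exactly; equivalently, the excluded ties number $A(X)-M=o(M)$. Your first remark, that the correction is harmless because $My_M$ is of smaller order, is wrong and should be deleted.
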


\begin{proof}
Define
\begin{equation*}
 N_K(y)=\left|\{n:d_K^{(k)}(n)>0,\ n^\alpha/d_K^{(k)}(n)\le y\}\right|.
\end{equation*}
Since $q=1/\alpha$, the condition $ n^\alpha/d_K^{(k)}(n)\le y$ is equivalent to
$n\,d_K^{(k)}(n)^{-q}\le y^q$. Thus
$N_K(y)=A(y^q)$, and~\eqref{eq:applicationcount} gives
\begin{equation}\label{eq:asmp_N_K}
 N_K(y)\sim Cq^{\Theta-1}y^q(\log y)^{\Theta-1}.
\end{equation}
Using the partial summation, we obtain
\begin{align}
 B_K(y)
 &:=\sum_{\substack{d_K^{(k)}(n)>0
                   n^\alpha/d_K^{(k)}(n)\le y}}
       \frac{d_K^{(k)}(n)}{n^\alpha}
 =\frac{N_K(y)}y+\int_0^y\frac{N_K(t)}{t^2}\,dt
 \sim\frac{Cq^{\Theta-1}}{\eta}
          y^{q-1}(\log y)^{\Theta-1},
 \label{eq:partial-sum}
\end{align}
where $q/(q-1)=1/\eta$. Since $d_K^{(k)}(n)n^{-\alpha}$ tends to zero as $n$ tends to infinity, ordering the positive values of $d_K^{(k)}(n)n^{-\alpha}$ in decreasing
order, with repetitions, and denote their indices by $n_1,n_2,\ldots$.
Let $y_M=n_M^\alpha/d_K^{(k)}(n_M)$. Then,
for every $0<\rho<1$, $ N_K((1-\rho)y_M)<M\le N_K(y_M)$.
The asymptotic formula \eqref{eq:asmp_N_K} for $N_K$ implies
$N_K((1-\rho)y_M)/N_K(y_M)\to(1-\rho)^q$.
Letting $\rho\to0$ after $M\to\infty$, we obtain
$M\sim N_K(y_M)$. Therefore, inverting this asymptotic yields
\begin{equation}\label{eq:asymp_y}
 y_M\sim C^{-\alpha}M^\alpha(\log M)^{-\kappa}.
\end{equation}
Finally, $ B_K((1-\rho)y_M)\le S_K(M)\le B_K(y_M)$.
Using the asymptotic formula~\eqref{eq:partial-sum} and then letting $\rho\to0$ gives
$S_K(M)\sim M/(\eta y_M)$. Hence,
equation~\eqref{eq:asymp_y} completes the proposition.
\end{proof}

\section{A signed resonance bound}
We use the following form of Lamzouri's argument
\cite[Theorem~1.2 and Proposition~2.1]{lamzouri}.
The proof allows relations among the frequencies and also includes
the zero-phase case.

\begin{lemma}\label{lem:resonance}
Let $f(n)\ge0$, $\sum_n f(n)<\infty$, and $\lambda_n>0$, and put
$F(t)=\sum_n f(n)e^{2\pi i\lambda_nt}$.
Let $\beta_0\in\{-\pi/4,0,\pi/4\}$, and let $\mathcal M$ be a set of
$M$ indices. For $3<Y<X$,
\begin{equation}\label{eq:resonance}
 \max_{Y\le t\le X}\Real(e^{i\beta_0}F(t))
 \ge\frac1{12}\sum_{n\in\mathcal M}f(n)
 +O\left( F(0)2^M\left(\frac{Y\log X}{X}\right)^{1/4}\right).
\end{equation}
\end{lemma}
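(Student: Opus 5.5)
We follow Lamzouri's resonance argument, multiplying $F$ by a nonnegative kernel that samples $t\in[Y,X]$ and rewards alignment of the frequencies $\lambda_n$, $n\in\mathcal M$. Write $\mathcal M=\{n_1,\dots,n_M\}$ and consider the trigonometric polynomial
\begin{equation*}
 P(t)=\prod_{j=1}^{M}\bigl(1+\cos(2\pi\lambda_{n_j}t)\bigr)\ge0 .
\end{equation*}
Expanding, $P(t)=\sum_{\epsilon\in\{-1,0,1\}^M}2^{-|\epsilon|}e^{2\pi i(\sum_j\epsilon_j\lambda_{n_j})t}$, with at most $3^M$ terms and total coefficient mass $2^M$. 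Relations among the frequencies are harmless because we never need the exponentials to be orthogonal. We only use the nonnegativity of $P$ and the positivity of the coefficients. Next we take a smooth nonnegative weight $\Phi$ supported in $[Y,X]$ with fast-decaying Fourier transform, for instance a Fejér-type kernel $K(u)=\bigl(\sin(\pi\delta u)/(\pi\delta u)\bigr)^2$ shifted and localized. The scale $\delta$ will be chosen at the end, and $\delta^{-1}\approx X^{3/4}(Y\log X)^{1/4}$ is the kind of choice that produces the exponent $1/4$.

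The core inequality is
\begin{equation*}
 \max_{Y\le t\le X}\Real\bigl(e^{i\beta_0}F(t)\bigr)\int\Phi P\ \ge\ \int \Real\bigl(e^{i\beta_0}F(t)\bigr)\Phi(t)P(t)\,dt ,
\end{equation*}
which holds because $\Phi P\ge0$. On the right we insert the expansions of $F$ and $P$. The diagonal terms are those where $\lambda_n$ equals one of the $\lambda_{n_j}$ and the frequency is cancelled by the $\epsilon_j=-1$ term (and only that one among the chosen signs). These give $\Real(e^{i\beta_0})\cdot\frac12 f(n_j)\widehat\Phi(0)\cdot(\text{mass of the remaining factors})$. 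In the zero-phase case there is also the constant term $F(0)$-type contribution.

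With $|\beta_0|\le\pi/4$ we have $\cos\beta_0\ge 1/\sqrt2$. The remaining contributions are of two kinds. The first consists of near-resonant terms with $\sum_j\epsilon_j\lambda_{n_j}\pm\lambda_n$ small but nonzero. These have nonnegative coefficients times $\widehat\Phi$, and we make them nonnegative by choosing $\Phi$ with $\widehat\Phi\ge0$, as the Fejér kernel has. Rotating by $e^{i\beta_0}$ with $\cos\beta_0>0$, and pairing each exponential with its conjugate so that only real parts survive, keeps them $\ge0$ up to a factor. The second consists of far-off frequencies, which are bounded by $F(0)2^M$ times the tail of $\widehat\Phi$. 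The localization of $\Phi$ to $[Y,X]$ costs a truncation error of relative size about $(Y\log X/X)^{1/4}$ after optimizing $\delta$. The normalization is to divide by $\int\Phi P\le 2^M\widehat\Phi(0)$. Lamzouri's bookkeeping yields the constant $1/12$: the factor $1/2$ from the cosine expansion, $1/\sqrt2$ from the phase, and losses from the normalization and from the fraction of $[Y,X]$ retained.

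The main obstacle is the positivity step. We need every non-diagonal term with small frequency to contribute nonnegatively after the phase rotation by $\beta_0$. When the phase is $\pm\pi/2$ this fails, because terms of the form $i\cdot(\text{positive})$ have zero real part. This is exactly why the classes $kr_1\equiv1,5\pmod 8$ are excluded. To handle it, we first rewrite $\Real(e^{i\beta_0}F(t))\Phi(t)P(t)$ as a sum of $c\cos(2\pi\mu t+\beta_0)\Phi(t)$ with $c\ge0$. We then compute $\int\cos(2\pi\mu t+\beta_0)\Phi(t)\,dt=\Real\bigl(e^{i\beta_0}\widehat\Phi(\mu)e^{2\pi i\mu t_0}\bigr)$, where $t_0$ is the center of $\Phi$. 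The plan is to take $\Phi$ symmetric about its center and $t_0$ such that the extra phase $2\pi\mu t_0$ is small for the relevant $\mu$, so that $|\mu|\lesssim\delta$ and the total angle stays within $\pi/2$. For the remaining $\mu$, we bound $|\widehat\Phi(\mu)|$ by the tail estimate. Balancing these two ranges is the delicate part and fixes the $1/4$ exponent.
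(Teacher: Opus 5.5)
Your proposal has a genuine gap at exactly the step you call the main obstacle, and the repair you sketch cannot work. Write the localized weight as $\Phi(t)=\phi(t-t_0)$ with $\phi\ge0$ even and supported in $[-A,A]$; support in $[Y,X]$ with $Y>0$ forces $A<t_0$. Then $\int e^{2\pi i\mu t}\Phi(t)\,dt=e^{2\pi i\mu t_0}\widehat\phi(\mu)$, so the positivity $\widehat\Phi\ge0$ you invoke for the near-resonant terms is lost.

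Your split into near frequencies $|\mu|\le\delta$ with small phase and far frequencies with negligible transform also cannot be realized. You bound the far terms by $F(0)2^M$ times the tail, so the tail beyond $\delta$ must be exponentially small in $M$ relative to $\widehat\phi(0)$. But $\widehat\phi$ has exponential type $2\pi A$ and $\widehat\phi(i\delta)\ge\widehat\phi(0)$, so a Poisson--Jensen estimate in the upper half-plane shows that this forces $A\delta\gg M$. On the other hand, keeping $2\pi\mu t_0+\beta_0$ inside $(-\pi/2,\pi/2)$ for all $|\mu|\le\delta$ forces $\delta t_0<1/4$, hence $A\delta<1/4$.

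Exact positivity is also out of reach. No nonzero weight supported in $[Y,X]$ can have $\Real(e^{i\beta_0}\widehat\Phi)\ge0$ at every frequency, because that real part is the transform of a positive-definite function vanishing near $t=0$, which must then vanish identically. So the weight must carry mass near $t=0$. The exponent $1/4$ is the price of that mass; it does not come from optimizing a Fej\'er scale $\delta$.

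The missing idea is Lamzouri's one-sided kernel $K(u)=u^{-3/4}e^{-u}/\Gamma(1/4)$ for $u>0$, used as $K(t/\tau)$ with $\tau=X/(2\log X)$. Its transform $(1+2\pi i\xi)^{-1/4}$ has argument in $(-\pi/8,\pi/8)$ for every real $\xi$. Hence $\Real(e^{i\beta_0}\widehat K(\xi))\ge\frac14\Real\widehat K(\xi)\ge0$ uniformly when $|\beta_0|\le\pi/4$. Every term in the expansion is then nonnegative, and no near/far decomposition is needed. The only loss is the mass of $K(t/\tau)$ outside $[Y,X]$: it is $\ll\tau(Y/\tau)^{1/4}$ on $[0,Y]$ and $\ll\tau X^{-2}$ beyond $X$, which is where $(Y\log X/X)^{1/4}$ comes from.

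Your normalization step needs the same positivity. The exact diagonal terms give $\frac12 f(n_j)\widehat\Phi(0)$ times the \emph{constant term} of $\prod_{i\ne j}(1+\cos(2\pi\lambda_{n_i}t))$, not its mass $2^{M-1}$. Dividing by the trivial bound $\int\Phi P\le 2^M\widehat\Phi(0)$ therefore loses a factor of order $2^{-M}$. The paper instead compares the contribution of each $n\in\mathcal M$ with the whole of $I_1=\int|R(t)|^2K(t/\tau)\,dt$. It does this using the resonator $R(t)=\prod_{n\in\mathcal M}(1-re^{2\pi i\lambda_nt})^{-1}$, whose coefficients satisfy $b_r(v+\lambda_n)\ge rb_r(v)$. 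Together with $I_1\ge\tau(1-r^2)^{-M}$ and $|R|\le(1-r)^{-M}$, the choice $r=1/3$ gives the constant $r/4=1/12$ and the factor $((1+r)/(1-r))^M=2^M$. Once the sector bound is available, your product $\prod_j(1+\cos(2\pi\lambda_{n_j}t))$ could also be made to work, up to the numerical constant, by keeping the terms with $\epsilon_j=-1$ and using $1+\cos\le2$. The real obstruction is the kernel, not the resonator.
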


\begin{proof}
For $0<r<1$, define
\begin{equation}\label{eq:productR(t)}
 R(t)=\prod_{n\in\mathcal M}(1-re^{2\pi i\lambda_nt})^{-1}.
\end{equation}
Let $\mathcal A$ be the set of nonnegative integer combinations of
the frequencies $\lambda_n$, $n\in\mathcal M$. Expanding $R(t)$,
we get
\begin{equation}\label{eq:sum_R(t)}
 R(t)=\sum_{v\in\mathcal A}b_r(v)e^{2\pi ivt},
 \qquad
 b_r(v)=
 \sum_{\substack{c:\mathcal M\to\mathbb Z_{\ge0}\\
                 \sum_{n\in\mathcal M}c(n)\lambda_n=v}}
 r^{\sum_{n\in\mathcal M}c(n)}.
\end{equation}
Here each coefficient $b_r(v)$ is a finite sum because all the selected frequencies
are positive. Moreover, $ \sum_{v\in\mathcal A}b_r(v)=(1-r)^{-M}$.
As in the proof of \cite[Theorem 2.1]{lamzouri}, for $n\in\mathcal{M}$, one has
\begin{equation}\label{eq:br}
b_r(v+\lambda_n)\ge rb_r(v).
\end{equation}
Consider the nonnegative kernel
\begin{equation*}
 K(u)=
 \begin{cases}
 \frac{u^{-3/4}e^{-u}}{\Gamma(1/4)},&u>0,\\
 0,&u\le0.
 \end{cases}
\end{equation*}
Its Fourier transform is
\begin{equation*}
 \widehat K(\xi)=\int_{\mathbb R}K(u)e^{-2\pi i\xi u}\,du
              =(1+2\pi i\xi)^{-1/4}.
\end{equation*}
Since its argument lies in $(-\pi/8,\pi/8)$, we have
\begin{equation}\label{eq:sector}
 \Real(e^{i\beta_0}\widehat K(\xi))
 \ge\frac14\Real\widehat K(\xi)\ge0.
\end{equation}
For $\beta_0=\pm\pi/4$, the ratio of the first real part to
$\Real\widehat K(\xi)$ is at least
$\cos(\pi/4)-\sin(\pi/4)\tan(\pi/8)=\sqrt2-1>1/4$.
For $\beta_0=0$, the ratio is one.

Put $\tau=X/(2\log X)$ and define
\begin{equation*}
 I_1=\int_0^\infty |R(t)|^2K(t/\tau)\,dt,\qquad
 I_2=\int_0^\infty
        \Real(e^{i\beta_0}F(t))|R(t)|^2K(t/\tau)\,dt.
\end{equation*}
Using~\eqref{eq:sector}, we obtain
\begin{equation*}
 I_2\ge\frac{\tau}{4}\sum_n f(n)
     \sum_{u,v\in\mathcal A}
       b_r(u)b_r(v)\Real\widehat K((v-u-\lambda_n)\tau).
\end{equation*}
For $n\in\mathcal M$ and $v=w+\lambda_n$, applying \eqref{eq:br}, 
\begin{equation}\label{eq:IJ}
I_2\ge\frac{r\tau}4\sum_{n\in\mathcal M}f(n)\sum_{u,w\in\mathcal A}b_r(u)b_r(w)\Real\left(\widehat K((w-u)\tau)\right)
=\frac{r}{4}\sum_{n\in\mathcal M}f(n)I_1.
\end{equation}
Similarly, keeping the diagonal terms in $I_1$ gives
\begin{equation*}
 I_1\ge\tau\sum_v b_r(v)^2
     \ge\tau\sum_v b_{r^2}(v)
     =\tau(1-r^2)^{-M}.
\end{equation*}

Let $V=\max_{Y\le t\le X}\Real(e^{i\beta_0}F(t))$, and let $Q$
be the mass of $|R(t)|^2K(t/\tau)\,dt$ outside $[Y,X]$.
Since $|V|\le F(0)$ and $|\Real(e^{i\beta_0}F(t))|\le F(0)$, we have 
\begin{equation*}
 I_2\le V(I_1-Q)+F(0)Q\le VI_1+2F(0)Q.
\end{equation*}
The bounds $|R(t)|\le(1-r)^{-M}$ and
$K(u)\ll u^{-3/4}$, together with $K(u)\ll e^{-u}$ for $u\ge1$, give
\begin{equation*}
 Q\ll(1-r)^{-2M}\tau
       \left((Y/\tau)^{1/4}+e^{-X/\tau}\right).
\end{equation*}
As $e^{-X/\tau}=X^{-2}$ and $Y>3$, it follows that
\begin{equation*}
 \frac Q{I_1}\ll
 \left(\frac{1+r}{1-r}\right)^M
 \left(\frac{Y\log X}{X}\right)^{1/4}.
\end{equation*}
Combining these inequalities with~\eqref{eq:IJ} and taking $r=1/3$
proves the lemma.
\end{proof}

\subsection{A smoothed Vorono\"i formula}
Let $\disc$ be the absolute discriminant of $K$. For $N,t\ge2$, set
\begin{equation}\label{eq:gaussian}
 \mathcal V_N(t)=\frac{N^{1/(mk)}}{\sqrt\pi}
 \int_{\mathbb R}\Delta_K^{(k)}(t^{mk}e^{u/t})
                 e^{-u^2N^{2/(mk)}}\,du.
\end{equation}
We also put
\begin{equation}\label{def:f(n)}
 f(n)=\frac{d_K^{(k)}(n)}{n^\alpha}
       \exp\left(-\pi^2
                    \left(\frac n{\disc^kN}\right)^{2/(mk)}\right),
\end{equation}
and
\begin{equation}\label{eq:phase}
 \lambda_n=mk\left(\frac n{\disc^k}\right)^{1/(mk)},
 \qquad
 \beta=\frac\pi4(kr_1-3).
\end{equation}
We use the following smoothed Vorono\"i formula from
\cite[Proposition~2.2]{km}.

\begin{proposition}\label{prop:voronoi}
For sufficiently large $t$ and $2\le N\le(\log t)^3$, we have
\begin{align}\label{eq:voronoi}
 \mathcal V_N(t)
 =\frac{\disc^{1/(2m)}}{\pi\sqrt{mk}}t^{\frac{mk-1}{2}}
     \sum_{n=1}^\infty f(n)\cos(2\pi\lambda_nt+\beta)
 +O\left(t^{mk/2-3/5}N^{2/(mk)}\right).
\end{align}
\end{proposition}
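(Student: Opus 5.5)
The plan is to derive the formula from a classical truncated Vorono\"i expansion of $\Delta_K^{(k)}$ and then average it against the Gaussian in \eqref{eq:gaussian}. The Gaussian averaging turns each oscillatory term into $f(n)\cos(2\pi\lambda_nt+\beta)$.

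\emph{Step 1: truncated Vorono\"i expansion.} Apply Perron's formula to $\zeta_K(s)^k$ and shift the contour to the left of $0$. Use the functional equation $\zeta_K(s)=\disc^{1/2-s}\gamma_K(s)\zeta_K(1-s)$, with $\gamma_K$ the usual product of $\Gamma$-factors, and evaluate the resulting integrals by Stirling's formula and the saddle point method (Landau's method). For $1\le T\le x^{A}$ this should give
\begin{equation*}
 \Delta_K^{(k)}(x)=\frac{\disc^{1/(2m)}}{\pi\sqrt{mk}}x^{\eta}\sum_{n\le T}\frac{d_K^{(k)}(n)}{n^{\alpha}}\cos\Bigl(2\pi mk\Bigl(\frac{nx}{\disc^k}\Bigr)^{1/(mk)}+\beta\Bigr)+O_\varepsilon\bigl(x^{1-1/(mk)+\varepsilon}T^{-1/(mk)}+x^{\varepsilon}\bigr).
\end{equation*}
The constant $\disc^{1/(2m)}/(\pi\sqrt{mk})$ and the phase $\beta=\frac\pi4(kr_1-3)$ come from the Stirling asymptotics of $\gamma_K(s)^k$. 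The exponents $r_1$ and $r_2$ enter only through the argument term of those asymptotics, which is where $kr_1$ appears. I would take this expansion from the literature on Landau's method (as in \cite{gkmn}) rather than re-derive it.

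\emph{Step 2: substitute and average.} Put $x=t^{mk}e^{u/t}$. Then
\begin{equation*}
 x^\eta=t^{(mk-1)/2}e^{\eta u/t},\qquad 2\pi mk\Bigl(\frac{nx}{\disc^k}\Bigr)^{1/(mk)}=2\pi\lambda_nt\,e^{u/(mkt)}=2\pi\lambda_nt+2\pi\Bigl(\frac n{\disc^k}\Bigr)^{1/(mk)}u+O\Bigl(\frac{\lambda_nu^2}{t}\Bigr).
 \end{equation*}
With this linearised phase, the Gaussian integral is exact:
\begin{equation*}
 \frac{N^{1/(mk)}}{\sqrt\pi}\int_{\mathbb R}\cos(a+bu)e^{-u^2N^{2/(mk)}}\,du=e^{-b^2/(4N^{2/(mk)})}\cos a .
\end{equation*}
Taking $b=2\pi(n/\disc^k)^{1/(mk)}$ gives exactly the damping factor $\exp(-\pi^2(n/(\disc^kN))^{2/(mk)})$ in \eqref{def:f(n)}. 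Hence the main term becomes the sum over $n\le T$ in \eqref{eq:voronoi}.

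\emph{Step 3: error terms.} I would then control four errors.
\begin{enumerate}[label=(\alph*)]
\item The factors $e^{\eta u/t}$ and $e^{O(\lambda_nu^2/t)}$ are $1+O((|u|+\lambda_nu^2)/t)$. The Gaussian restricts $|u|$ to $\ll N^{-1/(mk)}\sqrt{\log t}$. The weighted sum $\sum_n d_K^{(k)}(n)n^{-\alpha}\lambda_n\exp(-c(n/N)^{2/(mk)})$ is polynomial in $N\le(\log t)^3$. Together these give $O(t^{(mk-1)/2-1}(\log t)^{C})$, which is acceptable.
\item Completing the sum from $n\le T$ to all $n$ costs at most $\exp(-c(T/N)^{2/(mk)})$, which is negligible.
\item The Vorono\"i remainder with $x\asymp t^{mk}$ is $t^{mk-1+\varepsilon}T^{-1/(mk)}$. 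Choosing $T=t^{B}$ with $B$ large makes it $\le t^{mk/2-3/5}$.
\item The normalised Gaussian has total mass $1$, so averaging does not enlarge any remainder, and the range $|u|>t$ is negligible.
\end{enumerate}

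The main obstacle is Step 1 with a remainder that is uniform in $T$ up to a large power of $x$. Landau's classical truncated formula is usually stated with $T\le x$ or similar, and its error involves $x^{\eta}$ times the tail of the sum. If that uniformity is not available, my fallback is to avoid truncation altogether: write $\mathcal V_N(t)$ directly as a Mellin integral. The Gaussian smoothing contributes a factor whose Mellin transform decays like $\exp(-c|s|^2/(t^2N^{2/(mk)}))$, so the shifted contour converges absolutely. The dual series can then be expanded termwise, with the Stirling-expansion error supplying the $t^{mk/2-3/5}N^{2/(mk)}$ term.
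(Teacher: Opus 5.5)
The paper does not prove this proposition; it quotes it from \cite[Proposition~2.2]{km}. Your main-term computation is correct. This covers the constant, the phase, $x^\eta=t^{(mk-1)/2}e^{\eta u/t}$, the linearised phase, and the exact Gaussian integral that produces $\exp(-\pi^2(n/(\disc^kN))^{2/(mk)})$. Step~3(a), however, does not work.

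The damping factor appears only when the \emph{linearised} oscillatory term is integrated exactly. The remainders $O((|u|+\lambda_nu^2)/t)$ are bounded in absolute value, so they carry no damping and must be summed over the whole range $n\le T$. Since $\alpha-1/(mk)=\eta$ and $1-\eta=\alpha$, the phase remainder alone contributes, up to logarithms,
\[
 t^{\frac{mk-1}{2}}\cdot\frac{1}{t\,N^{2/(mk)}}\sum_{n\le T}\frac{d_K^{(k)}(n)}{n^{\eta}}
 \asymp t^{\frac{mk-3}{2}}N^{-2/(mk)}T^{\alpha}.
\]
The amplitude remainder contributes $t^{\frac{mk-3}{2}}N^{-1/(mk)}T^{\eta}$. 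Step~3(c) forces $T^{1/(mk)}\ge t^{mk/2-2/5+\varepsilon}$. Then the first quantity is $\le t^{mk/2-3/5}$ only if $(mk+1)(\frac{mk}{2}-\frac25)\le\frac95$. This is an equality when $mk=2$, so with your $\varepsilon$ and logarithmic losses it is at best borderline there, and it fails by a power of $t$ for every $mk\ge3$. The amplitude remainder also fails for $mk\ge3$. So no choice of $T$ satisfies (a) and (c) at once. The damped weighted sum with $\exp(-c(n/N)^{2/(mk)})$ that you invoke is not the sum that actually occurs in the error.

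The missing ingredient is to use the oscillation of the high-frequency terms rather than bounding them in absolute value. Split at $n\le Nt^{\delta}$ for a small fixed $\delta>0$. For these $n$ your linearisation is harmless: the loss is $t^{(mk-3)/2}(Nt^{\delta})^{\alpha}$, far below $t^{mk/2-3/5}$.

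For $Nt^{\delta}<n\le T$, do not linearise. Instead, integrate $e^{\eta u/t-u^2N^{2/(mk)}}\exp\bigl(\pm i\phi_n(u)\bigr)$ by parts $J$ times, using the exact phase $\phi_n(u)=2\pi\lambda_nte^{u/(mkt)}$. Here $\phi_n'(u)\asymp(n/\disc^k)^{1/(mk)}$ and $\phi_n''/\phi_n'=1/(mkt)$ on $|u|\le t$, while each derivative of the weight costs $\ll N^{1/(mk)}\sqrt{\log t}$ on its effective support. So each such term is $O_J((N/n)^{J/(mk)}(\log t)^{J/2})$ times its coefficient. For large $J$ the sum over $n\le T$ is $O(t^{-10})$. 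The corresponding damped terms $f(n)$ are equally negligible, so they can be restored as in (b).

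Alternatively, your Mellin fallback builds the damping in from the start. The factor $e^{s^2/(4t^2N^{2/(mk)})}$, evaluated at the saddle point $\Im s\approx2\pi t(n/\disc^k)^{1/(mk)}$, is exactly $\exp(-\pi^2(n/(\disc^kN))^{2/(mk)})$. As written, though, that route is only a sketch. In either version you still need the truncated expansion uniformly for $T$ up to about $x^{mk/2-2/5+\varepsilon}$, which exceeds $x$ once $mk\ge3$, as you noted.
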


The next lemma transfers a signed lower bound for the smoothed
quantity to the original error term.

\begin{lemma}\label{lem:transfer}
For $N\ge2$, sufficiently large $t$, and $\lambda\in\{-1,1\}$, we have 
\begin{equation}\label{eq:transfer}
 \sup_{|h|\le1}\lambda\Delta_K^{(k)}(t^{mk}e^h)
 \ge\lambda\mathcal V_N(t)
     + O\left(t^{2mk}\exp\left(-c t^2N^{2/(mk)}\right)\right),
\end{equation}
where $2c= N^{2/(mk)}$.
\end{lemma}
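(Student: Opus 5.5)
The plan is to read $\mathcal V_N(t)$ as a weighted average of $\Delta_K^{(k)}$ over the window $t^{mk}e^{u/t}$, and to bound the part of the weight with $|u|>t$ by crude estimates. The weight $\frac{N^{1/(mk)}}{\sqrt\pi}e^{-u^2N^{2/(mk)}}$ is a probability density on $\mathbb R$. Put $h=u/t$. For $|u|\le t$ we have $|h|\le1$, so
\begin{equation*}
\lambda\Delta_K^{(k)}(t^{mk}e^{u/t})\le \sup_{|h|\le1}\lambda\Delta_K^{(k)}(t^{mk}e^h)=:S.
\end{equation*}
Splitting the integral defining $\lambda\mathcal V_N(t)$ at $|u|=t$ then gives
\begin{equation*}
\lambda\mathcal V_N(t)\le S\cdot P(|u|\le t)+\frac{N^{1/(mk)}}{\sqrt\pi}\int_{|u|>t}\bigl|\Delta_K^{(k)}(t^{mk}e^{u/t})\bigr|e^{-u^2N^{2/(mk)}}\,du .
\end{equation*}
Because $S$ could be negative, I will not keep the factor $P(|u|\le t)$. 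Instead I write $S\,P(|u|\le t)=S-S\,P(|u|>t)$ and bound $|S|$ by a polynomial in $t$. The tail probability $P(|u|>t)$ is at most $\exp(-t^2N^{2/(mk)})$, which fits inside the stated error.

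The remaining task is a polynomial bound for $|\Delta_K^{(k)}(y)|$. The bound \eqref{eq:divisorbound} gives $d_K^{(k)}(n)\le d_{mk}(n)$, so $\sum_{n\le y}d_K^{(k)}(n)\ll y(\log y+1)^{mk-1}\ll y^{2}$ for $y\ge1$. The residue term is a polynomial in $\log y$ times $y$, hence also $O(y^2)$. For $y<1$ the sum vanishes and the residue term is $O(1+y\,|\log y|^{k'})=O(1)$. Altogether $|\Delta_K^{(k)}(y)|\ll 1+y^2$. This handles $S$, since $e^h\le e$ gives $|S|\ll t^{2mk}$.

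For the tail integral, $y=t^{mk}e^{u/t}$ gives $|\Delta|\ll 1+t^{2mk}e^{2u/t}$. This growth is absorbed by the Gaussian: on $u>t$ we have $2u/t\le 2u$, and
\begin{equation*}
-u^2N^{2/(mk)}+2u\le -\tfrac12u^2N^{2/(mk)}
\end{equation*}
once $u\ge 4N^{-2/(mk)}$, which holds for $u>t$ with $t$ large. The tail is therefore
\begin{equation*}
\ll t^{2mk}N^{1/(mk)}\int_{t}^\infty e^{-u^2N^{2/(mk)}/2}\,du\ll t^{2mk}\exp\!\bigl(-\tfrac12 t^2N^{2/(mk)}\bigr).
\end{equation*}
Reading the stated error as $t^{2mk}\exp(-c\,t^2)$ with $c=N^{2/(mk)}/2$, this matches it exactly; in either reading of $c$ the exponent is of size $t^2N^{2/(mk)}/2$. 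Rearranging the inequality gives $S\ge\lambda\mathcal V_N(t)+O(\cdot)$.

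The only delicate point is keeping the exponent constant as large as the statement requires, namely $\tfrac12$ in front of $t^2N^{2/(mk)}$. This forces the step of absorbing $e^{2u/t}$ into half of the Gaussian, rather than some smaller fraction of it. Everything else is routine.
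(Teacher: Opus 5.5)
Your argument is correct and is essentially the paper's proof: split the Gaussian average at $|u|=t$, bound the inner part by $S P_t$ and use $SP_t=S-S(1-P_t)$ with $|S|\ll t^{2mk}$ (from $|\Delta_K^{(k)}(y)|\ll 1+y^2$), and absorb $e^{2u/t}$ into half of the Gaussian on the tail. Both arguments actually give the error $t^{2mk}\exp\bigl(-\tfrac12 t^2N^{2/(mk)}\bigr)$; read literally, the statement's $2c=N^{2/(mk)}$ combined with the exponent $ct^2N^{2/(mk)}$ would demand the stronger $\exp\bigl(-\tfrac12 t^2N^{4/(mk)}\bigr)$, which neither proof delivers, so this is a typo in the statement rather than a gap in your proof (and it is harmless, since only an exponentially small error is used later).
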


\begin{proof}
Define \begin{equation*}
 w_N(u):=\frac{N^{\frac{1}{mk}}}{\sqrt\pi}\exp\left({-u^2N^{\frac{2}{mk}}}\right).
\end{equation*}
Note that $w_N(u)$ is positive and $\int_{-\infty}^{\infty} w_N(u) du =1$. Also, set
\begin{equation*} 
 S=\sup_{|h|\le1}\lambda\Delta_K^{(k)}(t^{mk}e^h),
 \quad \text{and} \quad P_t=\int_{|u|\le t}w_N(u)\,du.
\end{equation*}
Then
\begin{equation*}
 \int_{|u|\le t}\lambda\Delta_K^{(k)}(t^{mk}e^{u/t})
                         w_N(u)\,du\le SP_t.
\end{equation*}
The bound $d_K^{(k)}(n)\ll_\varepsilon n^\varepsilon$ implies
$|\Delta_K^{(k)}(y)|\ll_{K,k}1+y^2$ for $y>0$.
In particular, $|S|\ll t^{2mk}$.
For sufficiently large $t$ and $|u|>t$, we have
$-N^{\frac{2}{mk}}u^2+2u/t\leq- N^{\frac{2}{mk}}u^2/2$.
Therefore, Gaussian tail estimates give
\begin{equation*}
 \int_{|u|>t}|\Delta_K^{(k)}(t^{mk}e^{u/t})|w_N(u)\,du
       \ll t^{2mk}\exp\left(-c t^2 N^{\frac{2}{mk}}\right) \quad \text{and} \quad 1-P_t\ll \exp\left(-c t^2 N^{\frac{2}{mk}}\right).
\end{equation*}
 It follows that
\begin{equation*}
 \lambda\mathcal V_N(t)
 \le SP_t+O(t^{2mk}e^{-cN^{2/(mk)}t^2})
 \le S+O(t^{2mk}e^{-cN^{2/(mk)}t^2}),
\end{equation*}
which proves the lemma.
\end{proof}

\section{Proof of the main theorem}
We first fix one of the six residue classes in the theorem. Choose
$\lambda\in\{-1,1\}$ such that $\lambda\cos\beta>0$.
Then there exist a $\beta_0\in\{-\pi/4,0,\pi/4\}$ such that
$\lambda e^{i\beta}=e^{i\beta_0}$.

For sufficiently large $T$, we choose
\begin{equation*}
 M=\left\lfloor\frac{\log T}{16\log2}\right\rfloor
 \quad \text{and} \quad  N=(\log T)^2.
\end{equation*}
Let $\mathcal M=\{n_1,\ldots,n_M\}$ contain indices of the largest
$M$ positive values of $d_K^{(k)}(n)n^{-\alpha}$.
With this
choice of $N$, we use $f(n)$ and $\lambda_n$ as in \eqref{def:f(n)} and \eqref{eq:phase}, respectively,  and define
\begin{equation}\label{eq:fT}
 F(t)=\sum_{n=1}^\infty f(n)e^{2\pi i\lambda_nt}.
\end{equation}

We show that the exponential factor in $f(n)$ tends to one on $\mathcal M$.
For every selected index, $n^\alpha/d_K^{(k)}(n)\le y_M$, where
$y_M$ is defined in the proof of Proposition~\ref{prop:largest}.
The bound $d_K^{(k)}(n)\ll n^{\alpha/4}$ and~\eqref{eq:asymp_y}
give $ n^{3\alpha/4}\ll y_M\ll M^\alpha$.
Thus $n\ll M^{4/3}$, and in particular $n\le M^{3/2}$ for all
sufficiently large $M$. Hence, for sufficiently large $T$,
\begin{equation*}
 \max_{n\in\mathcal M}\frac n{\disc^kN}
 \ll(\log T)^{-1/2}\longrightarrow0.
\end{equation*}
Now Proposition~\ref{prop:largest} gives
\begin{equation}\label{eq:selectedmass}
 \sum_{n\in\mathcal M}f(n)
 \sim S_K(M)\asymp(\log T)^\eta(\log_2T)^\kappa.
\end{equation}
Also, for any fixed sufficiently small $\varepsilon>0$,
\begin{equation*}
 F(0)\ll_\varepsilon
 \sum_{n\ge1}n^{-\alpha+\varepsilon}
             \exp\left(-c(n/N)^{2/(mk)}\right)
 \ll_\varepsilon N^{1-\alpha+\varepsilon}
 =(\log T)^{2\eta+2\varepsilon}.
\end{equation*}

Applying Lemma~\ref{lem:resonance} with $Y=T/2$ and $X=T^2$,  the error in \eqref{eq:resonance} is
\begin{equation}\label{eq:reserror}
 F(0)2^M\left(\frac{(T/2)\log(T^2)}{T^2}\right)^{1/4}
 \ll T^{-3/16}(\log T)^{2\eta+2\varepsilon+1/4}=o(1).
\end{equation}
Therefore, there exist $t\in[T/2,T^2]$ such that
\begin{equation*}
 \lambda\Real(e^{i\beta}F(t))
 =\Real(e^{i\beta_0}F(t))
 \gg(\log T)^\eta(\log_2T)^\kappa.
\end{equation*}
Our choice of $N$ satisfies $2\le N\le(\log t)^3$ throughout this
interval for large $T$. The error in~\eqref{eq:voronoi}, divided by
$t^{(mk-1)/2}$, is
$O(t^{-1/10}N^{2/(mk)})=o(1)$ uniformly there. Thus
\begin{equation*}
 \lambda\mathcal V_N(t)
 \gg t^{(mk-1)/2}(\log T)^\eta(\log_2T)^\kappa.
\end{equation*}
Now Lemma~\ref{lem:transfer}, with its exponentially small error, gives
a point $x$ with $e^{-1}t^{mk}\le x\le e\,t^{mk}$ such that
\begin{equation}\label{eq:largedelta}
 \lambda\Delta_K^{(k)}(x)
 \gg t^{(mk-1)/2}(\log T)^\eta(\log_2T)^\kappa.
\end{equation}
The point can be chosen within a fixed factor of the supremum in Lemma \ref{lem:transfer}.
Since $x\asymp t^{mk}$ and $T/2\le t\le T^2$, we have
$\log x\asymp\log T$ and $\log_2x\asymp\log_2T$.
Consequently, along points tending to infinity,
\begin{equation*}
 \lambda\Delta_K^{(k)}(x)
 \gg(x\log x)^\eta(\log_2x)^\kappa.
\end{equation*}
Finally, $\cos((kr_1-3)\pi/4)$ is positive in classes $2,3,4$
and negative in classes $0,6,7$. This gives the required signed bounds.\qed

\section{Some special examples}
\subsection{The rational field}
For $K=\mathbb Q$, we have $m=1$ and $\delta_1=1$.
If $k\ge2$ and $k\not\equiv1,5\pmod8$, Theorem~\ref{thm:main} gives
\begin{equation*}
 \Delta_k(x)
 =\Omega\left(
   (x\log x)^{(k-1)/(2k)}
   (\log_2x)^{\frac{k+1}{2k}(k^{2k/(k+1)}-1)}
 \right).
\end{equation*}
The sign is positive when $k\equiv2,3,4\pmod8$ and negative when
$k\equiv0,6,7\pmod8$. In these cases, the result improves the earlier
bound of the second author~\cite{Mahatab(2026)}.
For $k\equiv1,5\pmod8$, we retain the earlier bound
\eqref{eq:km-bound}.
In particular, when $k=2$, the theorem gives Lamzouri's bound
\cite{lamzouri}
\begin{equation*}
 \Delta(x)=\Omega_+\left(
   (x\log x)^{1/4}(\log_2x)^{\frac34(2^{4/3}-1)}
 \right).
\end{equation*}

\subsection{Galois extensions}
For a Galois extension $K/\mathbb Q$ of degree $m\ge2$,
$\delta_m=1/m$ and $\delta_\nu=0$ for $1\le\nu<m$.
For the six residue classes in Theorem~\ref{thm:main}, we therefore get
\begin{equation*}
 \Delta_K^{(k)}(x)=\Omega\left(
 (x\log x)^{\frac{mk-1}{2mk}}
 (\log_2x)^{\frac{mk+1}{2mk}
    \left(\frac{(km)^{2mk/(mk+1)}}m-1\right)}
 \right).
\end{equation*}
A Galois number field is either totally real or totally imaginary.
If it is totally imaginary, then $r_1=0$, and the theorem gives
$\Omega_-$ for every $k\ge1$. If it is totally real, then $r_1=m$,
and the theorem gives $\Omega_+$ for $km\equiv2,3,4\pmod8$
and $\Omega_-$ for $km\equiv0,6,7\pmod8$.
For $km\equiv1,5\pmod8$, the earlier bound~\eqref{eq:km-bound}
remains available.

When $k=1$, a real quadratic field has a positive bound and an
imaginary quadratic field has a negative bound, both at the scale $ (x\log x)^{1/4}(\log_2x)^{\frac34(2^{1/3}-1)}.$
For $K=\mathbb Q(i)$, we have $d_K^{(1)}(n)=r(n)/4$ and
\begin{equation*}
 P(x):=\sum_{1\le n\le x}r(n)-\pi x
      =4\Delta_{\mathbb Q(i)}^{(1)}(x).
\end{equation*}
Thus the theorem also gives Lamzouri's negative bound for the
circle error term~\cite{lamzouri}.

\end{document}